\newtheorem{thm}{Theorem}
\newtheorem{thm*}{Theorem}
\newtheorem{prop}{Proposition}
\theoremstyle{definition}
\theoremstyle{remark}
\newtheorem{rmk}[prop]{Remark} 
\newcommand{\F}{{\mathbb{F}}}
\newcommand{\R}{{\mathbb{R}}}
\newcommand{\Z}{{\mathbb{Z}}}
\newcommand{\bK}{{\mathbb{K}}}
\newcommand{\bF}{{\mathbb{F}}}
\newcommand{\sm}[1]{C^\infty(#1)}
\newcommand{\til}[1]{\widetilde{#1}}
\newcommand{\ol}[1]{\overline{#1}}
\newcommand{\zt}{{\Z/(2)}}
\newcommand{\om}{\omega}
\newcommand{\cA}{\mathcal{A}}
\newcommand{\cH}{\mathcal{H}}
\newcommand{\fix}{\mathrm{Fix}}
\newcommand{\pr}{pseudo-rotation}
\def\mrm#1{{\mathrm{#1}}}
\def\cl#1{{\mathcal{#1}}}
\DeclareMathOperator{\Ham}{\mathrm{Ham}}
\DeclareMathOperator{\loc}{\mathrm{loc}}
\def\H2{H^{(2)}}
\begin{document}

\title{Pseudo-rotations and Steenrod squares revisited}

\author{Egor Shelukhin}
\date{}

\begin{abstract}
In this note we prove that if a closed monotone symplectic manifold $M$ admits a Hamiltonian \pr, which may be degenerate, then the quantum Steenrod square of the cohomology class Poincar\'{e} dual to the point must be deformed. This result gives restrictions on the existence of {\pr}s, implying a form of uniruledness by pseudo-holomorphic spheres, and generalizes a recent result of the author. The new component in the proof consists in an elementary calculation with capped periodic orbits.

\end{abstract}

\subjclass[2010]{53D40, 37J10, 37J45}

\maketitle

\section{Setup}





In this paper, $(M,\om)$ denotes a closed monotone symplectic manifold  of dimension $2n,$ with the symplectic form rescaled so that $[\om] = 2 c_1(TM)$ on the image of the Hurewicz map $\pi_2(M) \to H_2(M;\Z).$ For a Hamiltonian diffeomorphism $\phi \in \Ham(M,\om),$ we denote by $\fix(\phi)$ the set of its {\em contractible} fixed points, and by $x^{(k)}$ for $x \in \fix(\phi)$ its image under the inclusion $\fix(\phi) \subset \fix(\phi^k).$ Contractible means that the homotopy class $\alpha(x,\phi)$ of the path $\alpha(x,H) = \{ \phi^t_H(x)\}$ for a Hamiltonian $H \in \cl{H} \subset \sm{[0,1] \times M, \R}$ generating $\phi$ as the time-one map $\phi^1_H = \phi$ of its Hamiltonian flow, is trivial\footnote{This class does not depend on the choice of Hamiltonian by a classical argument in Floer theory.}. Here $\cl{H} = \cap_{t \in [0,1]} \ker(I_t),$ $I_t(H) = \int_M H(t,-)\, \om^n.$ 

To an isolated fixed point $x$ one associates (cf. \cite{GG-negmon}) a local cohomology group $HF_{\loc}(\phi,x),$ which is naturally $\zt$-graded. If we choose a capping $\ol{x}$ of $\alpha(x,H),$ we obtain a $\Z$-graded version, $HF_{\loc}^{\ast}(H,\ol{x}) = HF_{\loc}^{\ast}(\til{\phi},\ol{x}).$ It depends only on the class $\til{\phi}$ of $\{\phi^t_H\}_{t \in [0,1]}$ in the universal cover $\til{\Ham}(M,\om),$ and the capped orbit $\ol{x}.$

We say that $\phi \in \Ham(M,\om)$ is an $\F_2$ {\em Hamiltonian \pr} if:
\begin{enumerate}[label = (\roman*)]
\item \label{cond: perfect} It is perfect, that is for all iterations $k \geq 1$ of $\phi,$ $\fix(\phi^k) = \fix(\phi).$ In other words, $\phi$ admits no simple periodic orbits of order $k >1.$

\item \label{cond: sum of Betti} for all iterations $k\geq 1,$ \[\displaystyle N(\phi^k,\F_2) = \sum_{x \in \fix(\phi)} \dim_{\F_2} HF^*_{\loc}(\phi^k,x^{(k)}) = \dim_{\F_2} H^*(M; \F_2).\] 
\end{enumerate}

\begin{rmk}\label{rmk: pseudorot}
Observe that a perfect Hamiltonian diffeomorphism necessarily has no symplectically degenerate maxima (see \cite{GG-revisited}). Furthermore, if $\phi$ is strongly non-degenerate, that is all the points in $\fix(\phi^k)$ are non-degenerate, for all $k \geq 1,$ then $HF_{\loc}(\phi,x) \cong \bK$, and all iterations are {\em admissible}, that is $\lambda^k \neq 1$ for all eigenvalues $\lambda \neq 1$ of $D(\phi)_x$. By the Smith inequality in local Floer homology \cite{CineliGinzburg, SZhao-pants}, conditions \ref{cond: perfect} and \ref{cond: sum of Betti} imply for iterations $k = 2^m,$ the stronger statement that for all $x \in \fix(\phi),$ $\dim_{\F_2} HF^{\loc}(\phi^k,x^{(k)}) = \dim_{\F_2} HF^{\loc}(\phi, x).$ Moreover, \cite[Theorem A]{S-HZ} indicates that when a Hamiltonian diffeomorphism has a finite number of periodic points, then a condition like \ref{cond: sum of Betti} should be satisfied. Showing this in general would bridge the gap between the initial Chance-McDuff conjecture (see for example \cite{GG-revisited}) and the main result of this note, Theorem \ref{thm: uniruled}. Dynamics of Hamiltonian {\pr}s in higher dimensions were recently studied by Ginzburg and G\"{u}rel \cite{GG-pseudorotations}. We refer thereto for further discussion of this interesting notion, and survey results more closely related to the subject of this paper in Section \ref{sec: results}.
\end{rmk}


We remind the reader that the minimal Chern number of $(M,\om)$ is the index \[N = N_M = [\Z:\mrm{image}(c_1(TM): \pi_2(M) \to \Z)].\] The notion of mean-index, introduced in symplectic topology in \cite{SalamonZehnder} is described as follows. For a Hamiltonian $H \in \cl{H}$ generating $\til{\phi} \in \til{\Ham}(M,\om)$ and capped periodic orbit $\ol{x}$ of $H,$ we set \[\Delta(H,\ol{x}) = \Delta(\til{\phi}_H,\ol{x}) = \lim_{k \to \infty} \frac{1}{k} CZ(\til{\phi}^k, \ol{x}^{(k)}),\] where $\ol{x}^{(k)}$ is $\ol{x}$ iterated $k$ times, which is indeed a capped periodic orbit of a Hamiltonian generating $\til{\phi}^k.$ The limit exists, since the Conley-Zehnder index comes from a quasi-morphism $\til{Sp}(2n,\R) \to \R$ (see \cite{EntovPolterovich-rigid}). The mean-index satisfies the following properties that we use below:

\begin{enumerate}
	\item {\em homogeneity:} $\Delta(\til{\phi}^k,\ol{x}^{(k)}) = k \cdot \Delta(\til{\phi},\ol{x}),$ for all $k \in \Z_{>0}.$
	\item {\em recapping:} $\Delta(\til{\phi},\ol{x} \# A) = \Delta(\til{\phi},\ol{x}) - 2\left<c_1(TM),A\right>,$ $A \in H_2^S(M;\Z).$
	\item {\em distance to index:} $CZ(\til{\phi},\ol{x}) \in [\Delta(\til{\phi},\ol{x}) - n, \Delta(\til{\phi},\ol{x}) + n],$ 
	
	
	\item {\em support of local Floer cohomology:} $HF_{\loc}^r(\til{\phi},\ol{x}) = 0,$ unless $r \in [\Delta(\til{\phi},\ol{x}) - n, \Delta(\til{\phi},\ol{x}) + n].$
	
	\item {\em symplectically degenerate maxima:} by definition, $\ol{x}$ is not a SDM of $\phi,$ if $HF_{\loc}^r(\til{\phi},\ol{x}) = 0,$ unless $r \in [\Delta(\til{\phi},\ol{x}) - n, \Delta(\til{\phi},\ol{x}) + n).$ 
\end{enumerate}

\begin{rmk}
	Observe that, by the symmetry of the Conley-Zehnder index and a simple duality argument, if the reversal $\ol{x}^{(-1)}$ of $\ol{x}$ is not an SDM of $\phi^{-1},$ then $HF_{\loc}^r(\til{\phi},\ol{x}) = 0,$ unless $r \in (\Delta(\til{\phi},\ol{x}) - n, \Delta(\til{\phi},\ol{x}) + n].$
\end{rmk}


Finally, for a quantum cohomology class $\mu \in QH^*(M;\Lambda_{\F_2}) \setminus \{0\}$ and a Hamiltonian $H \in \cH$ with isolated contractible fixed points $\fix(\phi^1_H),$ we recall that the Hamiltonian spectral invariant $c(\mu,H)$ of $\mu$ is {\em carried} by a capped $1$-periodic orbit $\ol{x}$ of $H,$ if in a suitable sense $\ol{x}$ is a lowest action term in a highest minimal action representative of the image $PSS_{H}(\mu)$ of $\mu$ under the PSS isomorphism \cite{PSS} from the quantum cohomology $QH^*(M,\Lambda_{\F_2})$ to the filtered Floer cohomology of the Hamiltonian $H.$ Keeping in mind the duality between Floer homology and Floer cohomology \cite{LeclercqZapolsky}, we refer the reader to \cite{GG-negmon, GG-revisited} for a detailed description of this notion, recording only the following two facts:

\begin{enumerate}
	\item {\em spectrality:} for $(M,\om)$ rational, in particular monotone, for each non-zero class $\mu \in QH^*(M;\Lambda_{\F_2}),$ and $H \in \cH$ with $\# \fix(\phi^1_H) < \infty,$ $c(\mu,H)$ is carried by at least one capped $1$-periodic orbit $\ol{x}$ of $H.$
	
	\item {\em contribution to local Floer cohomology:} if $\mu$ is homogeneous of degree $k,$ and $\ol{x}$ carries $c(\mu, H),$ then $HF^k_{\loc}(H,\ol{x}) \neq 0.$ 

\end{enumerate}



\section{Results}\label{sec: results}

We call a symplectic manifold strongly uniruled if there exists a non-trivial $3$-point genus-$0$ Gromov-Witten invariant $\left< [pt], a, b\right>_{\beta},$ for $\beta \in H_2(M,\Z) \setminus \{0\}.$ By \cite[Lemma 2.1]{McDuff-uniruled}, if $(M,\om)$ is not strongly uniruled, then the quantum square $\mu \ast \mu = 0$ for the degree $2n$ cohomology class $\mu$ Poincar\'{e} dual to the point class. A generally different stronger notion than $\mu \ast \mu = 0,$ is that the quantum Steenrod square  $\cl{QS}(\mu),$ defined in \cite{Wilkins}, of the volume class $\mu$ satisfy \begin{equation}\label{eq: non St uni} \cl{QS}(\mu) = h^{2n} \mu.\end{equation} Note that $\cl{QS}(\mu)$ is equal to the classical Steenrod square $Sq(\mu) = h^{2n} \mu$ plus quantum corrections coming from certain pseudo-holomorphic curves. When \eqref{eq: non St uni} does not hold, we say that $M$ is {\em $\zt$-Steenrod uniruled}. The main result of this note is the following.

\medskip

\begin{thm}\label{thm: uniruled}
Let $(M,\om)$ be a closed monotone symplectic manifold admitting an $\F_2$ Hamiltonian pseudo-rotation $\phi.$ Then $(M,\om)$ is $\zt$-Steenrod uniruled. 
\end{thm}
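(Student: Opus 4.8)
The plan is to argue by contradiction. Suppose $(M,\om)$ is \emph{not} $\zt$-Steenrod uniruled, so that $\cl{QS}(\mu) = h^{2n}\mu$, and derive a contradiction from the existence of an $\F_2$ Hamiltonian {\pr} $\phi$. Fix a normalized generating Hamiltonian $H \in \cH$ for $\phi$. Being an $\F_2$ {\pr}, $\phi$ has $\fix(\phi)$ finite and, by perfectness, $\fix(\phi^{2^m}) = \fix(\phi)$ for all $m \ge 0$; every contractible capped $1$-periodic orbit of $H^{(2^m)}$ is then a recapping $\ol{z}^{(2^m)}\#A$ of the $2^m$-th iterate of a capped orbit $\ol{z}$ of $H$. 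By the spectrality property applied to $H^{(2^m)}$, the invariant $c(\mu, H^{(2^m)})$ is carried by such a capped orbit $\ol{x}_m$ --- so $\cl{A}_{H^{(2^m)}}(\ol{x}_m) = c(\mu, H^{(2^m)})$ --- and, since $\deg\mu = 2n$, the contribution property gives $HF^{2n}_{\loc}(H^{(2^m)}, \ol{x}_m) \neq 0$.

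The first ingredient would be the Floer-theoretic incarnation of the quantum Steenrod square: a $\zt$-equivariant pair-of-pants operation $\cl{QS}_H \colon HF^*(H) \to HF^*(H^{(2)})\tens\F_2[h]$ intertwining $\cl{QS}$ with the PSS isomorphisms, $\cl{QS}_H\circ PSS_H = PSS_{H^{(2)}}\circ\cl{QS}$, and refining the action bound for the pair-of-pants product, so that a class carried at action level $\le c$ is sent to a class carried at level $\le 2c$, \emph{with no additive error term}. Feeding in $PSS_H(\mu)$ (carried at action level $c(\mu,H)$) and using $\cl{QS}(\mu) = h^{2n}\mu$ gives $h^{2n}PSS_{H^{(2)}}(\mu)$, whose only $h$-coefficient is $PSS_{H^{(2)}}(\mu)$; since multiplication by $h$ does not shift the action filtration, $PSS_{H^{(2)}}(\mu)$ is carried at level $\le 2c(\mu,H)$, whence $c(\mu, H^{(2)}) \le 2c(\mu, H)$, and by iteration $c(\mu, H^{(2^m)}) \le 2^m c(\mu, H)$ for all $m$. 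On the other hand, Poincar\'{e} duality for spectral invariants (cf.\ \cite{LeclercqZapolsky}), together with the triangle inequality for the unit class $1 \in QH^*(M)$, gives the reverse bound $c(\mu, H^{(2^m)}) \ge 2^m c(\mu, H)$. Hence $c(\mu, H^{(2^m)}) = 2^m c(\mu, H)$ for every $m$.

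The second, new, ingredient is an elementary calculation with capped periodic orbits. As $\fix(\phi)$ is finite, after passing to an infinite set $S$ of exponents I may assume the carriers $\ol{x}_m = \ol{z}^{(2^m)}\#A_m$, $m \in S$, all project to the iterate of a single $z \in \fix(\phi)$. Recapping by $A$ shifts the action by $-\brat{\om, A}$ and the mean index by $-2\brat{c_1(TM), A}$, which coincide by the normalization $[\om] = 2c_1(TM)$ on spherical classes; hence $\delta := \cl{A}_H(\ol{z}) - \Delta(\til{\phi}, \ol{z})$ is independent of the capping $\ol{z}$. Using homogeneity under iteration of both the action and the mean index,
\[ \Delta(\til{\phi}^{2^m}, \ol{x}_m) \;=\; \cl{A}_{H^{(2^m)}}(\ol{x}_m) - 2^m\delta \;=\; c(\mu, H^{(2^m)}) - 2^m\delta \;=\; 2^m\big(c(\mu, H) - \delta\big). \]
But $HF^{2n}_{\loc}(H^{(2^m)}, \ol{x}_m)\neq 0$ and the support property of local Floer cohomology force $\Delta(\til{\phi}^{2^m}, \ol{x}_m) \in [\,2n-n,\,2n+n\,] = [n, 3n]$ for every $m \in S$ --- a bounded set, while $2^m\to\infty$ along $S$. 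This is possible only if $c(\mu, H) = \delta$, in which case $\Delta(\til{\phi}^{2^m}, \ol{x}_m) = 0$, contradicting $2n \in [n, 3n]$ when $n \ge 1$ (the case $n=0$ being vacuous). This completes the argument. The same rigidity underlies Remark~\ref{rmk: pseudorot}: a perfect Hamiltonian diffeomorphism has no symplectically degenerate maxima.

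I expect the principal obstacle to be the first ingredient --- establishing the compatibility of the quantum Steenrod square with the PSS maps \emph{and} with the action filtration, crucially with no positive additive error, since iterating would amplify any such error and spoil the comparison. This is where the framework of \cite{Wilkins} and the author's earlier treatment of the non-degenerate case would be invoked. On the new side, the care needed is in the bookkeeping for possibly degenerate carriers --- using local Floer cohomology and the support and distance-to-index properties of the mean index in place of Conley--Zehnder indices --- and in the reduction, via finiteness of $\fix(\phi)$, to a single geometric orbit along the dyadic tower $\phi, \phi^2, \phi^4,\dots$.
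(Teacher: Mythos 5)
Your second ingredient --- pigeonholing the carriers of $c(\mu,H^{(2^m)})$ onto a single fixed point, writing them as recappings $\ol{z}^{(2^m)}\#A_m$, and playing the action against the mean index via the capping-independent quantity $\cA_H(\ol z)-\Delta(\til\phi,\ol z)$ --- is exactly the paper's new argument, just packaged slightly differently (the paper tracks $\om(A_i)\le 0$ and $c_1(A_i)>0$ separately and contradicts monotonicity; your $\delta$-bookkeeping uses monotonicity up front to make $\cA-\Delta$ capping-independent, which is equivalent). The genuine gap is in your first ingredient, specifically in how you obtain the inequality $c(\mu,H^{(2^m)})\ge 2^m c(\mu,H)$. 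You derive it from ``Poincar\'e duality for spectral invariants together with the triangle inequality.'' But the Poincar\'e duality property for Hamiltonian spectral invariants is \emph{not} a general feature of monotone symplectic manifolds: it is precisely the additional hypothesis (guaranteed, e.g., when $N>n$) under which the author had already proved this theorem in \cite{S-PRQS}, and removing it is the whole point of this note. Without PD, the triangle inequality only yields $c(\mu,K)+c(1,\bar K)\ge 0$, which gives the useless direction. So as written, the crucial lower bound is unjustified.

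The paper's Theorem~\ref{thm: spec ineq} (quoted from \cite{S-PRQS}) asserts that the \emph{undeformed quantum Steenrod square itself} yields $c(\mu,\til\psi^2)\ge 2\,c(\mu,\til\psi)$ --- note this is the opposite direction from the $\le$ you extract from the filtered equivariant pair-of-pants. You should check your filtration conventions here; in any case the inequality you actually need, and the only one the paper uses, is $\ge$, and it comes from the Steenrod square with no appeal to PD. Once you have $\ge$, your endgame goes through without the equality: $\Delta(\til\phi^{2^m},\ol x_m)=c(\mu,H^{(2^m)})-2^m\delta\ \ge\ 2^m\,\Delta(\til\phi,\ol x_0)$, and boundedness of the left side (from the support property of local Floer cohomology) forces $\Delta(\til\phi,\ol x_0)\le 0$. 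To contradict this you need the \emph{strict} positivity $\Delta(\til\phi,\ol x_0)>0$, which in the paper's grading convention ($QH^{2n}\cong HF^{n}$, so the support property only gives $\Delta\in[0,2n]$) requires the fact that a perfect Hamiltonian diffeomorphism has no symplectically degenerate maxima --- a point you mention in passing but never actually wire into the argument; your $[n,3n]$ window comes from taking the Floer degree of $\mu$ to be $2n$ rather than $n$, which sidesteps the SDM issue only by a convention that does not match the paper's identification $HF^n(\til\phi)\cong QH^{2n}(M;\Lambda_{\F_2})$.
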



%
%



\begin{rmk}\label{rmk: Steenrod basic}
	We observe following \cite{S-PRQS} that when $(M,\om)$ is $\zt$-Steenrod uniruled, then there exists a $J$-holomorphic curve through each point of $M$ for each $\om$-compatible almost complex structure $J.$ Hence, Theorem \ref{thm: uniruled} provides a geometric obstruction to the existence of pseudo-rotations. Other arguments described in \cite{S-PRQS} provide obstructions to the existence of pseudo-rotations for monotone manifolds with $N>n,$ and rule it out completely for $N>n+1.$ We remark that the existence of pseudo-rotations was ruled out in \cite{GG-revisited} (and references therein), in a strong way, by proving the Conley conjecture, for manifolds such that $\om(A) \cdot c_1(A) \leq 0$ for all spherical homology classes $A \in H_2^S(M;\Z).$ 
\end{rmk} 

\begin{rmk}
Theorem \ref{thm: uniruled} was proven by the author in \cite{S-PRQS} under the additional assumption that $(M,\om)$ satisfies the Poincar\'{e} duality property for Hamiltonian spectral invariants (which holds in particular whenever the minimal Chern number of $(M,\om)$ satisfies $N > n$). Under the assumption that $\phi$ is strongly non-degenerate, Theorem \ref{thm: uniruled} was also proved by \c{C}ineli, Ginzburg, and G\"{u}rel in \cite{CGG2}, using different additional arguments extending \cite{S-PRQS}. A related, but quite different, result relating the existence of pseudo-rotations to pseudo-holomorphic spheres was shown in \cite{CGG}. 
\end{rmk}





\medskip

\section{Proof}

The proof of the main result relies on the following observations. First, as mentioned in Remark \ref{rmk: pseudorot}, no fixed point of $\phi$ or $\phi^{-1}$ is a symplectically degenerate maximum (see \cite{GG-revisited}). In particular, if the capping $\ol{x}$ of a contractible fixed point $x \in \fix(\phi)$ carries a cohomology class $\mu$ of Conley-Zehnder index $n$ in $HF^n(\til{\phi}) \cong QH^{2n}(M,\Lambda_{\F_2})$ for a lift $\til{\phi}$ of $\phi$ to the universal cover $\til{\Ham}(M,\om)$ of $\Ham(M,\om),$ then its mean-index $\Delta = \Delta(\til{\phi},\ol{x})$ satisfies $n< \Delta + n.$ Hence \begin{equation}\label{eq: Delta pos} \Delta(\til{\phi},\ol{x}) > 0.\end{equation}

%


%

Second, the following result was proven in \cite{S-PRQS} specifically in the setting of a {\em pseudo-rotation} assuming that \eqref{eq: non St uni} holds. Here $\mu \in QH^{2n}(M,\Lambda_{\F_2})$ denotes the cohomology class Poincar\'{e} dual to the point.

%
\begin{thm}\label{thm: spec ineq}
Let $\psi$ be an $\bF_2$ {\pr} of $(M,\om)$ that is not $\zt$-Steenrod uniruled. Then \begin{equation}\label{eq: greater or equal} c(\mu,\til{\psi}^2) \geq 2 \cdot c(\mu, \til{\psi})\end{equation} for each $\til{\psi} \in \til{\Ham}(M,\om)$ covering $\psi.$
\end{thm}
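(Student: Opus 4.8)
The plan is to compare the two spectral invariants $c(\mu,\til\psi)$ and $c(\mu,\til\psi^2)$ by tracking the capped orbits that carry them and exploiting the quantum-Steenrod-square identity $\cl{QS}(\mu) = h^{2n}\mu$ together with a multiplicativity property of spectral invariants under squaring. First I would invoke spectrality: let $\ol x$ be a capped $1$-periodic orbit of a Hamiltonian $H \in \cH$ generating $\til\psi$ that carries $c(\mu,H)$, so that $c(\mu,\til\psi) = \cl A_H(\ol x)$ and, by the contribution-to-local-Floer-cohomology property, $HF^n_{\loc}(\til\psi,\ol x) \neq 0$. Since $\psi$ is a pseudo-rotation, $\fix(\psi^2) = \fix(\psi)$, so the orbit carrying $c(\mu,\til\psi^2)$ is, up to recapping, the twice-iterate $\ol x^{(2)}$ of one of the finitely many capped orbits $\ol y$ associated to the fixed points of $\psi$; the recapping parameter is controlled by the valuation of the corresponding term in $PSS_{H^{(2)}}(\mu)$.

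The heart of the argument is the quantum Steenrod square. The key input is the ``frozen'' or totally-non-equivariant part of the quantum Steenrod operation: the failure of $\cl{QS}(\mu)$ to be purely classical is precisely measured by quantum corrections, and when $\psi$ is \emph{not} $\zt$-Steenrod uniruled, \eqref{eq: non St uni} holds, so $\cl{QS}(\mu) = h^{2n}\mu$ has no quantum corrections. On the Floer side this translates, via the compatibility of $PSS$ with Steenrod operations and the pair-of-pants/squaring product on Floer cohomology, into a statement that the class carrying $c(\mu,\til\psi^2)$ in $HF^{2n}(\til\psi^2)$ is represented by (a highest-minimal-action representative related to) the square of the representative of $\mu$ in $HF^n(\til\psi)$, with no lower-action quantum terms being forced in. Concretely I would set up the commutative diagram relating $PSS_H$, $PSS_{H^{(2)}}$, the classical Steenrod square $Sq$ on $QH^*(M;\Lambda_{\F_2})$, and the quantum Steenrod square, and read off that the action of $\ol x^{(2)}$ (with its PSS-induced capping) equals $2\,\cl A_H(\ol x)$; the sub-additivity / triangle inequality for spectral invariants under the product then yields $c(\mu,\til\psi^2) \geq 2c(\mu,\til\psi)$.

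The main obstacle I anticipate is the bookkeeping of cappings and action values in the Steenrod-square construction: one must be careful that the PSS image of $\cl{QS}(\mu)$ lands in the correct action-window of the Floer complex of $H^{(2)}$ and that, in the absence of quantum corrections, the lowest-action term of a highest-minimal-action representative is genuinely $\ol x^{(2)}$ rather than a recapping thereof with strictly smaller action; this is exactly where the pseudo-rotation hypothesis (perfectness, so that no new orbits appear under iteration) and the normalization $[\om] = 2c_1$ on spherical classes must be used to rule out spurious recappings lowering the action. A secondary technical point is to ensure the degree-$n$ class $\mu \in QH^{2n}$ is indeed carried by a capped orbit that is not a symplectically degenerate maximum, which is guaranteed by Remark \ref{rmk: pseudorot}; this is needed so the carrier $\ol x$ of $c(\mu,\til\psi)$ is well-behaved under iteration. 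Once these points are settled, the inequality \eqref{eq: greater or equal} follows formally.
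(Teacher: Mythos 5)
A preliminary remark: the paper does not actually prove Theorem~\ref{thm: spec ineq}; it imports it from \cite{S-PRQS}. So the comparison below is with the argument of that reference, whose overall shape you have correctly guessed: one must relate $c(\mu,\til{\psi})$ and $c(\mu,\til{\psi}^2)$ through a squaring operation on Floer cohomology intertwined with $\cl{QS}$ via PSS, and use the hypothesis that $\cl{QS}(\mu)=h^{2n}\mu$ is undeformed.

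However, your decisive step is the wrong mechanism, and this is a genuine gap rather than a bookkeeping issue. First, the ``sub-additivity / triangle inequality for spectral invariants under the product'' gives an \emph{upper} bound, $c(a\ast b,\til{\phi}\til{\psi})\le c(a,\til{\phi})+c(b,\til{\psi})$, which is the opposite direction to \eqref{eq: greater or equal}; and applied to the non-equivariant pair-of-pants product it is vacuous anyway, since $\mu\ast\mu=0$ whenever $M$ is not strongly uniruled, so $c(\mu\ast\mu,\til{\psi}^2)=-\infty$. The entire point of passing to the quantum Steenrod square is that the $\Z/2$-\emph{equivariant} pair-of-pants product, with coefficients extended by the equivariant parameters $h$ and $\theta$, produces the class $\cl{QS}(\mu)=h^{2n}\mu$, which does \emph{not} vanish even though $\mu\ast\mu$ does; your plan never mentions equivariance, and without it the argument collapses. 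Second, the inequality one extracts is a \emph{lower} bound coming from the filtration property of the equivariant pair-of-pants map (it sends the action-level-$\lambda$ part of the Floer complex of $\til{\psi}$ to the action-level-$2\lambda$ part for $\til{\psi}^2$), yielding an estimate on the \emph{equivariant} spectral invariant of $h^{2n}\mu$ for $\til{\psi}^2$ in terms of $2c(\mu,\til{\psi})$. One must then compare that equivariant spectral invariant with the ordinary $c(\mu,\til{\psi}^2)$; it is precisely in this comparison (via the structure of the equivariant complex as a module over $\F_2[[h]]$ and the finiteness/perfectness conditions \ref{cond: perfect}--\ref{cond: sum of Betti}) that the pseudo-rotation hypothesis is used --- not merely, as you suggest, to control recappings of carriers. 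Your plan is silent on both the equivariant product and the equivariant-to-ordinary comparison, which are the two essential ingredients.
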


We proceed to the proof of the main result, which follows a calculation from \cite{GG-negmon}.

\begin{proof}[Proof of Theorem \ref{thm: uniruled}] 

Choose $H \in \cl H,$ such that the path $\{\phi^t_H \}_{t \in [0,1]}$ represents the class $\til{\phi}$ lifting $\phi.$ By the pigeonhole principle, there exists a fixed point $x \in \fix(\phi),$ and an increasing sequence $k_i$ such that $c(\mu, H^{(r_i)})$ for $r_i =  2^{k_i}$ is carried by a capping $y_i$ of the $1$-periodic orbit of the $r_i$-iterated Hamiltonian $H^{(r_i)}$ corresponding to $x^{(r_i)}.$ By taking a power of $\phi,$ we can assume that $r_1 = 1,$ and set $y= y_1.$ Write $y_i$ as a recapped iteration of $y,$ that is \begin{equation}\label{eq: recap} y_i = y^{(r_i)} \# A_i.\end{equation} We claim that for $r_i$ large, $\om(A_i) \leq 0,$ and $c_1(A_i) > 0$ contradicting  monotonicity. Indeed, write $\cA_i$ for the action functional of $H^{(r_i)},$ and $\cA:=\cA_1.$ Then by \eqref{eq: recap} and Theorem \ref{thm: spec ineq}, \[r_i \cA(y) - \om(A_i) = \cA_i(y_i) = c(\mu, H^{(r_i)}) \geq r_i c(\mu, H) = r_i \cA(y).\]Hence \[\om(A_i) \leq 0.\] However, we know that $y_i$ carries $c(\mu, H^{(r_i)}),$ hence $\Delta(H^{(r_i)},y_i) \in (0,2n)$ and also $\Delta(H,y) \in (0,2n).$ Hence $r_i \Delta(H,y) > 2n$ for $r_i$ large enough, and \[2n > \Delta(H^{(r_i)}, y_i) = r_i \Delta(H, y) - 2c_1(A_i).\] Therefore \[c_1(A_i) > 0.\] \end{proof}

\section*{Acknowledgements}
I thank Paul Seidel for a very useful discussion, and Erman \c{C}ineli, Viktor Ginzburg, and Ba\c{s}ak G\"{u}rel for sharing their preprint \cite{CGG2} with me. At the University of Montr\'{e}al I was supported by an NSERC Discovery Grant, by the Fonds de recherche du Qu\'{e}bec - Nature et technologies, and by the Fondation Courtois.

\bibliographystyle{abbrv}
\bibliography{bibliographyCM}

\end{document}